\newcommand\Seq[1]{\langle #1 \rangle}
\newcommand\I[1]{\mathbb{1}_{\{ #1 \}}}
\newcommand\II[1]{\mathbb{1}_{ #1 }}
\newcommand\st{\,;\;}
\DeclareMathOperator{\bP}{\mathbf{P}\mathopen{}}
\newcommand\dfn[1]{\textit{\textbf{#1}}}
\newcommand\ev[1]{\mathcal {#1}}
\newcommand\sX{{\mathfrak X}}
\newcommand\restrict{\!\upharpoonright\!}
\newcommand\cN{{\mathcal N}}
\newcommand\C{{\mathbb C}}
\newcommand\N{{\mathbb N}}
\newcommand\F{{\mathscr F}}
\newcommand\G{{\mathscr G}}
\newcommand\HH{{\mathscr H}}
\newcommand\CC{{\mathscr C}}
\newcommand\parts{{\mathscr A}}
\newcommand\cbuldot{{\raise.25ex\hbox{$\scriptscriptstyle\bullet$}}}
\begin{document}

We give a very short proof that determinantal point processes
have a trivial tail $\sigma$-field. This conjecture of Lyons \cite{Lyons:ICM}
has been proved by Osada and Osada \cite{Osada}\footnote{In fact, there is
a gap in \cite{Osada}: Lemmas 4 and 5 there do not follow from the reasoning
given and can be false when the tail $\sigma$-field is nontrivial. This gap
can be filled via reasoning similar to that used here. More precisely, one
can use the partition $\CC$ here in place of their sequence of partitions
$\mathit{\Delta}(\ell)$.}
as well as by Bufetov, Qiu, and Shamov \cite{BQS}.
Osada and Osada relied on the earlier result of Lyons \cite{L:det}
that the conjecture held in the discrete case, as does the present short
proof. In the discrete case and under the restrictive
assumption that the spectrum of
$K$ is contained in the open interval $(0, 1)$, Shirai and Takahashi
\cite{ShiTak:I} also proved
that the tail $\sigma$-field is trivial.
In the continuous setting, tail triviality is important in proving
pathwise uniqueness of solutions of certain infinite-dimensional
stochastic differential equations related to determinantal point processes
\cite{OsadaTan:ISDE-tail}.

Our proof here relies on an extension of Goldman's transference principle,
as elucidated in \cite{Lyons:ICM}.

\section{Goldman's Transference Principle} \label{s.goldman}

We review some definitions. See \cite{Lyons:ICM} for more details.

Let $E$ be a locally compact Polish space
(equivalently, a locally compact second countable Hausdorff space).
Let $\mu$ be a Radon measure on $E$, i.e., a Borel measure that is finite
on compact sets.
Let $\cN(E)$ be the set of Radon measures on $E$ with values in $\N \cup
\{\infty\}$.
We give $\cN(E)$ the vague topology generated by the maps $\xi \mapsto
\int f \,d\xi$ for continuous $f$ with compact support; then $\cN(E)$ is
Polish.
The corresponding Borel $\sigma$-field of
$\cN(E)$ is generated by the maps $\xi \mapsto
\xi(A)$ for Borel $A \subseteq E$.

Let $\sX$ be a simple point process on $E$, i.e., a random variable with
values in $\cN(E)$ such that $\sX(\{x\}) \in \{0, 1\}$ for all $x \in E$.
We call 
$\sX$ \dfn{determinantal} if for some measurable $K
\colon E^2 \to \C$ and all $k \ge 1$, the
function $(x_1, \dots, x_k) \mapsto \det[K(x_i, x_j)]_{i, j \le k}$
is a $k$-point intensity function of $\sX$.
In this case, we denote the law of $\sX$ by $\bP^K$.

We consider only $K$ that are locally square integrable (i.e., $|K|^2
\mu^2$ is Radon), are Hermitian (i.e., $K(y, x) = \overline{K(x, y)}$ for
all $x, y \in E$), and are positive semidefinite.
In this case, $K$ defines a positive semidefinite integral operator 
$
(Kf)(x) := \int K(x, y) f(y) \,d\mu(y)
$
on functions $f \in L^2(\mu)$ with compact support.
We consider $K$ as defined only up to changes on a $\mu^2$-null set.
For every Borel $A \subseteq E$, we denote by $\mu_A$ the measure $\mu$
restricted to Borel subsets of $A$ and by
$K_A$ the compression of $K$ to $A$, i.e., $K_A f := (K f)
\restrict A$ for $f \in L^2(A, \mu_A)$.
If $\sX$ has law $\bP^K$, then the restriction of $\sX$ to $A$ has law
$\bP^{K_A}$.
The operator $K$ is locally trace-class, i.e.,
for every compact $A \subseteq E$,
the compression $K_A$ 
is trace class, having a
spectral decomposition 
$
K_A = \sum_k \lambda^A_k\, \phi^A_k \otimes \overline{\phi^A_k}
$,
where $\Seq{\phi^A_k \st k \ge 1}$ are orthonormal eigenfunctions of $K_A$
with positive summable eigenvalues $\Seq{\lambda^A_k \st k \ge 1}$.

The following extends Goldman's transfer principle from trace-class
operators, as given in
\cite[Section 3.6]{Lyons:ICM}, to locally trace-class operators:

\begin{theorem} \label{t.transfer}
Let $\mu$ be a Radon measure on a locally compact Polish space, $E$.
Let $K$ be a locally trace-class positive contraction on $L^2(E, \mu)$.
Let $\Seq{A_i \st i \ge 1}$ be a partition of $E$ into
precompact Borel subsets of $E$. 
Then there exists a denumerable set $F$ with a partition
$\Seq{B_i \st i \ge 1}$ and a positive contraction $Q$ on $\ell^2(F)$ such that
the joint distribution of the random variables
$\Seq{\sX(A_i) \st i \ge 1}$ for $\sX \sim \bP^K$ equals the joint
$\bP^Q$-distribution of the random variables $\Seq{\sX(B_i) \st i \ge 1}$
for $\sX \sim \bP^Q$.
Moreover, we can choose $Q$ to be unitarily equivalent to $K$.
\end{theorem}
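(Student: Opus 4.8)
The plan is to express the joint law of the counts through a Fredholm determinant and to recognize it as an invariant under simultaneous unitary conjugation, then to realize this invariant on a discrete space. For each $i$ let $P_i$ be the orthogonal projection of $L^2(E, \mu)$ onto $L^2(A_i, \mu_{A_i})$, that is, multiplication by $\II{A_i}$; since $\Seq{A_i \st i \ge 1}$ partitions $E$, the $P_i$ are mutually orthogonal and sum to the identity. Because each $A_i$ is precompact, $K_{A_i}$ is trace class, so $\E\sX(A_i) = \mathrm{tr}\, K_{A_i} < \infty$ and each count is a.s.\ finite. Hence the joint law of $\Seq{\sX(A_i) \st i \ge 1}$ is determined by its finite-dimensional marginals, and each marginal by its probability generating function. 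Fixing a finite index set $T$, writing $S := \bigcup_{i \in T} A_i$ (precompact, so $K_S$ is trace class) and $D := \sum_{i \in T} z_i P_i$ with $|z_i| \le 1$, one restricts to $S$ and applies the standard generating-function identity for determinantal processes with trace-class kernels (the trace-class case of Goldman's principle) to get $\E^K\bigl[\prod_{i \in T} z_i^{\sX(A_i)}\bigr] = \det\bigl(I - (I - D) K_S\bigr)$.

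Next I would build the transferring unitary. Decompose $L^2(E, \mu) = \bigoplus_i L^2(A_i, \mu_{A_i})$; each summand is separable because $E$ is second countable and $\mu$ is Radon, so let $d_i \in \{0, 1, 2, \dots, \infty\}$ be its Hilbert-space dimension. Choose a set $B_i$ of cardinality $d_i$, so that $\ell^2(B_i) \cong L^2(A_i, \mu_{A_i})$, fix a unitary $U_i$ between them, and set $F := \bigsqcup_i B_i$ (a countable set) and $U := \bigoplus_i U_i \colon L^2(E, \mu) \to \ell^2(F)$. Then $U$ is unitary and $U\bigl(L^2(A_i, \mu_{A_i})\bigr) = \ell^2(B_i)$, so if $\tilde P_i$ is the coordinate projection of $\ell^2(F)$ onto $\ell^2(B_i)$ then $U P_i U^* = \tilde P_i$. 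Define $Q := U K U^*$; it is again a positive contraction, and since compact subsets of the discrete space $F$ (with counting measure) are finite, every compression of $Q$ is finite rank and hence $Q$ is trivially locally trace class, so $\bP^Q$ is a well-defined determinantal process.

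Finally I would match the two sides. For a finite $T$, put $B_T := \bigsqcup_{i \in T} B_i$; then $U$ restricts to a unitary $L^2(S) \to \ell^2(B_T)$ carrying $K_S$ to $Q_{B_T}$ and $D$ to $\tilde D := \sum_{i \in T} z_i \tilde P_i$, whence $\det\bigl(I - (I - \tilde D) Q_{B_T}\bigr) = \det\bigl(I - (I - D) K_S\bigr)$. Applying the same generating-function identity on the discrete side gives $\E^Q\bigl[\prod_{i \in T} z_i^{\sX(B_i)}\bigr] = \det\bigl(I - (I - \tilde D) Q_{B_T}\bigr)$. Thus all finite-dimensional probability generating functions agree, so the finite-dimensional marginals agree, and therefore the joint law of $\Seq{\sX(A_i) \st i \ge 1}$ under $\bP^K$ equals that of $\Seq{\sX(B_i) \st i \ge 1}$ under $\bP^Q$. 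The ``moreover'' is immediate, since $Q = U K U^*$ is by construction unitarily equivalent to $K$.

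The only delicate points, and where I would spend the most care, are the two reductions to precompact blocks: justifying that the Fredholm-determinant generating-function identity genuinely applies after compressing to the trace-class kernels $K_S$ and $Q_{B_T}$, and confirming that matching finite-dimensional marginals suffices, which rests on each $\sX(A_i)$ being a.s.\ finite. By contrast, the unitary construction itself is routine once the counts are seen to depend on the pair $\bigl(K, \Seq{P_i \st i \ge 1}\bigr)$ only through the determinant, which is manifestly invariant under simultaneous unitary conjugation.
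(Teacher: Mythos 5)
Your proposal is correct and follows essentially the same route as the paper: the same block-diagonal unitary carrying $L^2(A_i, \mu_{A_i})$ onto $\ell^2(B_i)$, the definition $Q := U K U^*$, reduction to finitely many blocks where the compressions are trace class via the restriction property, and an appeal to the trace-class case of the transference principle. The only cosmetic difference is that you re-derive that case through the Fredholm-determinant formula for the joint probability generating function, whereas the paper invokes it directly as a distributional statement (\cite[Theorem 3.4]{Lyons:ICM}).
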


\begin{proof}
For each $i$, fix an orthonormal
basis $\Seq{w_{i, j} \st j < n_i}$ for the subspace of $L^2(E, \mu)$ of
functions that vanish outside $A_i$.
Here, $n_i \in \N \cup \{\infty\}$.
Define $B_i := \{(i, j) \st j < n_i\}$ and $F := \bigcup_i B_i$.
Let $T$ be the isometric isomorphism (i.e., unitary map) from $L^2(E, \mu)$
to $\ell^2(F)$ that sends $w_{i, j}$ to $\I{(i, j)}$.
Define $Q := T K T^{-1}$, so $Q$ is unitarily equivalent to $K$.
Note that for all $\phi \in L^2(E)$ and all $i \ge 1$, we have $T \II{A_i}
\phi = \II{B_i} T\phi$.

For $m \ge 1$, write $E_n := \bigcup_{i=1}^m A_i$ and $F_m :=
\bigcup_{i=1}^m B_i$.\vadjust{\kern2pt}
Then $K_{E_m}$ and $Q_{F_m}$ are unitarily equivalent trace-class 
operators. \vadjust{\kern2pt}
If $\Seq{\phi_{k, m} \st k \ge 1}$ are orthonormal eigenvectors of
$K_{E_m}$, so that $K_{E_m} = \sum_k \lambda^{E_m}_k \phi_{k, m} \otimes
\overline{\phi_{k, m}}$, then\vadjust{\kern2pt}
$Q_{F_m} = \sum_k \lambda^{E_m}_k T\phi_{k, m} \otimes
\overline{T\phi_{k, m}}$. Furthermore, for all $\phi, \psi \in L^2(E,
\mu)$ \vadjust{\kern2pt}
and all $i \ge 1$, we have $(\II{A_i} \phi, \psi)_{L^2(E, \mu)} =
(T\II{A_i} \phi, T\psi)_{\ell^2(F)} = 
(\II{B_i} T\phi, T\psi)_{\ell^2(F)}$.
Thus,
\cite[Theorem 3.4]{Lyons:ICM} shows that\vadjust{\kern2pt}
the $\bP^{K_{E_m}}$-distribution of $\Seq{\sX(A_i) \st i \le m}$ equals the
$\bP^{Q_{F_m}}$-distribution of $\Seq{\sX(B_i) \st i \le
m}$.\vadjust{\kern2pt}
But these are precisely 
the $\bP^K$-distribution of $\Seq{\sX(A_i) \st i \le m}$ and
the $\bP^Q$-distribution of $\Seq{\sX(B_i) \st i \le m}$, respectively.
Because these are equal for all $m \ge 1$, the desired result follows.
\end{proof}

\section{Tail Triviality: Deduction from the Discrete Case}

For a Borel set $A
\subseteq E$, let $\F(A)$ denote the $\sigma$-field 
on $\cN(E)$ generated by the functions $\xi \mapsto \xi(B)$ for Borel $B
\subseteq A$.
The \dfn{tail $\sigma$-field} is the intersection of 
$\F(E \setminus A)$ over all compact $A \subseteq E$; it is said to be
trivial when each of its events has probability 0 or 1.
For a collection $\parts$ of Borel subsets of $E$, write $\G(\parts)$ for
the $\sigma$-field generated by the functions $\xi \mapsto \xi(B)$ for $B
\in \parts$.

\begin{theorem}[conjectured by \cite{Lyons:ICM}, proved by \cite{Osada,BQS}] \label{t.ctail}
If $K$ is a locally trace-class positive contraction, then
$\bP^K$ has a trivial tail $\sigma$-field.
\end{theorem}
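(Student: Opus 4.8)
The plan is to reduce the continuous tail-triviality statement to the already-established discrete case via the transference principle of Theorem~\ref{t.transfer}. The tail $\sigma$-field is defined through the counting functions $\xi \mapsto \xi(B)$ for Borel $B$ disjoint from larger and larger compact sets. The key observation is that tail events ought to be approximable by events measurable with respect to a \emph{countable} family of counts $\Seq{\sX(A_i) \st i \ge 1}$, where $\Seq{A_i \st i \ge 1}$ is a fixed countable partition of $E$ into precompact Borel sets chosen fine enough to separate points. Since $E$ is a locally compact Polish space, I would first fix such a generating partition, together with exhausting compact sets, so that the entire tail $\sigma$-field sits inside $\G(\parts)$ for a suitable countable collection $\parts$ of finite unions of the $A_i$.

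The central step is then to apply Theorem~\ref{t.transfer} to this partition $\Seq{A_i}$, producing a denumerable set $F$ with partition $\Seq{B_i}$ and a positive contraction $Q$ on $\ell^2(F)$ such that the joint law of $\Seq{\sX(A_i) \st i \ge 1}$ under $\bP^K$ matches the joint law of $\Seq{\sX(B_i) \st i \ge 1}$ under $\bP^Q$. The point is that $\bP^Q$ is a \emph{discrete} determinantal process on the countable ground set $F$. I would then invoke the discrete tail-triviality theorem of Lyons~\cite{L:det}: the tail $\sigma$-field of $\bP^Q$, namely the intersection of the $\sigma$-fields generated by counts $\Seq{\sX(B_i) \st i \notin S}$ over finite $S \subseteq \N$, is trivial. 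Because the transference is distributional and respects the indexing by $i$, a tail event for $\bP^K$ expressed through $\Seq{\sX(A_i)}$ corresponds under the law-identification to a tail event for $\bP^Q$ expressed through $\Seq{\sX(B_i)}$, and the latter has probability $0$ or $1$.

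The main obstacle, and the step requiring the most care, is verifying that the continuous tail $\sigma$-field really is captured by the discrete tail $\sigma$-field of the associated sequence $\Seq{\sX(A_i)}$, rather than merely being contained in it. The difficulty is that a continuous tail event lives ``at infinity'' with respect to an exhaustion by compact sets, whereas the discrete tail lives at infinity in the index $i$; one must arrange the partition so that these two notions of ``infinity'' align. Concretely, I would choose the partition so that each compact $A \subseteq E$ meets only finitely many of the blocks $A_i$, and so that the blocks outside any given compact set are exactly those with large index. Then $\F(E \setminus A)$ for compact $A$ corresponds to the $\sigma$-field generated by $\Seq{\sX(A_i) \st i \ge N(A)}$ for a threshold $N(A) \to \infty$ as $A$ exhausts $E$, so that the intersection over compact $A$ matches the intersection over finite index-sets $S$. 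Establishing this correspondence rigorously, taking account of the approximation of arbitrary Borel subsets of $E \setminus A$ by unions of the partition blocks, is where the measure-theoretic bookkeeping concentrates.

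Once this alignment is in place, the conclusion is immediate: any tail event $\ev{T}$ for $\bP^K$ has, under the measure-preserving identification with $\bP^Q$, probability equal to that of a tail event for the discrete process $\bP^Q$, and the latter is $0$ or $1$ by the discrete theorem. Hence $\bP^K$ has a trivial tail $\sigma$-field.
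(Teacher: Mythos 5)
Your overall strategy---reduce to the discrete case via Theorem~\ref{t.transfer} and invoke the tail triviality of \cite{L:det}---is exactly the paper's, and you have correctly located the crux: aligning the continuous tail (infinity with respect to an exhaustion by compacts) with the discrete tail (infinity in the index). But your proposed resolution of that crux has a genuine gap. The problem is not the indexing, which is easy to arrange as you describe; it is measurability. For a compact $D$, the $\sigma$-field $\F(E \setminus D)$ is generated by $\xi \mapsto \xi(B)$ for \emph{all} Borel $B \subseteq E \setminus D$, whereas after transference you only control the counts $\sX(A_i)$ of the blocks of one fixed countable partition. The $\sigma$-field $\G\bigl(\{A_i \st A_i \cap D = \varnothing\}\bigr)$ is in general a strict sub-$\sigma$-field of $\F(E\setminus D)$, and a tail event need not be measurable with respect to it, even modulo null sets. (Relatedly, a single countable partition of an uncountable $E$ into precompact Borel sets cannot separate points; only a \emph{sequence} of increasingly finer partitions can.) So the assertion that ``the entire tail $\sigma$-field sits inside $\G(\parts)$'' for one countable collection built from one partition is precisely the unproved step, and your suggested fix (make blocks outside a compact have large index) does not touch it.

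The paper closes this gap with a martingale argument that your sketch is missing. One takes a sequence of increasingly finer partitions $\parts_m$ separating points, so that $\G(\parts_m) \uparrow \F(E)$, and uses L\'evy's upward theorem: for a tail event $\ev A$, which lies in $\F(E \setminus D^{(n)})$ for every $n$, the conditional probabilities $\bP(\ev A \mid \G_m^{(n)})$ with respect to the counts of the $m$th partition's blocks avoiding $D^{(n)}$ converge in $L^1$ to $\II{\ev A}$ as $m \to \infty$. A diagonal choice $m_n$ then produces a single (still countable, and $\ev A$-dependent) partition $\CC$ generated by all these blocks, such that $\ev A$ lies in the completion of $\HH_n = \G(\{C_k \st k \ge n\})$ for \emph{every} $n$; L\'evy's downward theorem gives $\bP(\ev A \mid \bigcap_n \HH_n) = \II{\ev A}$ a.s. Only at this point is Theorem~\ref{t.transfer} applied---to the partition $\CC$---and the discrete result of \cite{L:det} shows $\bigcap_n \HH_n$ is trivial, forcing $\bP(\ev A) \in \{0,1\}$. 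Without some version of this approximation step (upward martingale convergence plus diagonalization, or an equivalent), the reduction to the discrete tail is not justified.
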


\begin{proof}
Consider a sequence of increasingly finer partitions $\parts_m = \{A_{m, i}
\st i \ge 1\}$ of $E$ by
precompact Borel sets $A_{m, i}$ such that the sequence $\Seq{\parts_m \st
m \ge 1}$ separates points of $E$.
(This can be obtained, for example, by writing $E$ as a countable union of
compact sets \cite[Theorem 5.3]{Kechris} and partitioning each compact set
by the fact that it is a continuous image of
the Cantor set \cite[Theorem 4.18]{Kechris}.) Then
the corresponding count $\sigma$-fields $\G(\parts_m)$ 
increase to the Borel $\sigma$-field $\F(E)$,
so L\'evy's 0-1 law tells us that for every event $\ev A \in \F(E)$, we
have $\bP\bigl(\ev A \mid \G(\parts_m)\bigr)$ converges in $L^1$
to $\II {\ev A}$. 
Similarly, if $D^{(n)} := \bigcup_{i=1}^n A_{1, i}$ and $\G_m^{(n)} :=
\G\bigl(\{A_{m, i} \st A_{m, i} \cap
D^{(n)} = \varnothing,\, i \ge 1\}\bigr)$, then for each $n$ and all
$\ev A \in \F(E \setminus D^{(n)})$, we have
$\bP(\ev A \mid \G_m^{(n)})$ converges in $L^1$ to $\II {\ev A}$
as $m \to\infty$. 
In particular, if $\ev A$ is a tail event, then there is a sequence $m_n
\to\infty$ such that $\bP(\ev A \mid \G_{m_n}^{(n)})$ converges 
in $L^1$ to $\II {\ev A}$ as $n \to\infty$. 
It follows that 
$\ev A$ belongs to the completion of the $\sigma$-field $\bigvee_{n \ge k}
\G_{m_n}^{(n)}$ for each $k \ge 1$.

Now let $\ev A$ be a tail event and $\Seq{m_n \st n \ge 1}$ be such a
sequence. Let $\CC := \Seq{C_k \st k \ge 1}$ be the parts of the partition
of $E$ generated by $\{A_{m_n, i} \st A_{m_n, i} \cap D^{(n)} =
\varnothing,\, n \ge 1,\, i \ge 1\}$.
Write $\HH_n := \G\bigl(\{C_k \st k \ge n\}\bigr)$.
Then $\ev A$ belongs to the completion of the $\sigma$-field
$\HH_n$ for each $n \ge 1$, whence $\bP(\ev A \mid \bigcap_{n \ge 1} \HH_n)
= \lim_{n \to\infty} \bP(\ev A \mid \HH_n)
= \II{\ev A}$ a.s.\ by L\'evy's downwards theorem. 
By Theorem~\ref{t.transfer}, there is
a partition $\Seq{B_k \st k \ge 1}$ of a denumerable set $F$
and a positive contraction $Q$ on $\ell^2(F)$ such that the
$\bP^{Q}$-distribution of 
$\Seq{\sX(B_k) \st k \ge 1}$ equals the
$\bP^K$-distribution of $\Seq{\sX(C_k) \st k \ge 1}$.
Let $\HH'_n := \G\bigl(\{B_k \st k \ge n\}\bigr)$. Then $\bigcap_{n \ge 1}
\HH'_n$ is contained in the tail $\sigma$-field $\bigcap_{B \text{ finite}}
\F(F \setminus B)$. Since the latter is trivial by
\cite[Theorem 7.15]{L:det}, so is the former.  
Therefore, so is 
$\bigcap_{n \ge 1} \HH_n$, whence $\ev A$ has probability 0 or 1.
\end{proof}




\providecommand{\bysame}{\leavevmode\hbox to3em{\hrulefill}\thinspace}
\providecommand{\MR}{\relax\ifhmode\unskip\space\fi MR }
\providecommand{\MRhref}[2]{%
  \href{http://www.ams.org/mathscinet-getitem?mr=#1}{#2}
}
\providecommand{\href}[2]{#2}



\end{document}